 \newtheorem{theorem}{Theorem}[section]
 \newtheorem{lemma}[theorem]{Lemma}
 \newtheorem{proposition}[theorem]{Proposition}
 \newtheorem{definition}[theorem]{Definition}
 \theoremstyle{definition}
 \newtheorem{remark}[theorem]{Remark}
 \newtheorem{example}[theorem]{Example}
  \newcommand{\mb}[1]{\ensuremath{\mathbb{#1}}}
  \newcommand{\N}{\mb{N}}
  \newcommand{\R}{\mb{R}}
  \newcommand{\C}{\mb{C}}
  \newcommand{\LT}[1]{\ensuremath{{\cal L}#1}}
  \newcommand{\ILT}[1]{\ensuremath{{\cal L}^{-1}#1}}
  \newcommand{\al}{\alpha}
  \newcommand{\be}{\beta}
  \newcommand{\ga}{\gamma}
  \newcommand{\vphi}{\varphi}
  \newcommand{\eps}{\varepsilon}
  \newcommand{\spp}{\ensuremath{{\cal S}}'_+}
  \newcommand{\E}{\ensuremath{{\cal E}}}
  \newcommand{\supp}{\mathop{\mathrm{supp}}}
  \renewcommand{\Re}{\ensuremath{\text{Re }}}
  \newcommand{\dis}[2]{\langle #1 , #2 \rangle}
  \title{Distributional framework for solving fractional differential equations}
  \author{Teodor M. Atanackovic\thanks{Faculty of Technical Sciences, University of Novi Sad,
  Trg D. Obradovi\'ca 5, 21000 Novi Sad, Serbia,
  (\tt{atanackovic@uns.ns.ac.yu})}\hspace{0,2cm}
  Ljubica Oparnica
  \thanks{Institute of Mathematics, Serbian Academy of Science,
  Kneza Mihaila 35, 11000 Belgrade, Serbia,
  (\tt{ljubicans@sbb.co.yu})}\hspace{0,2cm}
  Stevan Pilipovi\'{c}
  \thanks{Department of Mathematics and Informatics, University of Novi Sad,
  Trg Dositeja Obradovi\'ca 4, 21000 Novi Sad, Serbia,
  (\tt {pilipovic@im.ns.ac.yu})}
  }
\begin{document}
 \date{}
 \maketitle

 \begin{abstract}
  We
  analyze solvability of a special form of
  distributed order fractional
  differential equations
   \begin{equation}\label{ConstEq}
   \int_{0}^{2}\phi_{1}(\gamma)D^{\gamma}y(t)d\gamma =
   \int_{0}^{2}\phi _{2}(\gamma )D^{\gamma}z(t)d\gamma,\quad t>0,
   \end{equation}
 within  $\spp$, the space of tempered distributions
  supported by $[0,\infty)$.

 \noindent {\footnotesize Keywords: {Distributed order fractional differential equations;
 Tempered distribution; Laplace transform}}

 \noindent {\footnotesize AMS Subject 26A33; 46F12}
 \end{abstract}

 \section{Motivation and Introduction}\label{intro}

 We consider a distributed order fractional differential equation
 (\ref{ConstEq})
 which arises in the theory of constitutive equations for viscoelastic
 bodies. $\phi _{1},\phi _{2}$ are certain functions or distributions
 which characterize a material under consideration and, in general,
 are determined from experiments.
 $D^{\gamma }$, $\gamma \in \mathbb{R}$,
 is the left Riemann-Liouville operator of fractional differentiation or
 integration defined as follows.

 Denote by $L_{loc+}^{1}(\mathbb{R})$
 the space of locally integrable functions $y$ on $\R$
 such that
 $y(t)=0$, $t<0$.
 Then for $y\in L_{loc+}^{1}(\mathbb{R})$ the left fractional
 integral of order $\ga > 0$ is defined by
 \begin{equation*}
 I^{\ga}y(t):=\frac{1}{\Gamma (\ga)}\int_{0}^{t}(t-\tau )^{\ga-1}y(\tau )d\tau,\quad t >
 0.
 \end{equation*}
 Here $\Gamma $ is the Euler gamma function.
 If $\ga=0$ then $I^{0}y : = y$.
 It can be shown (cf. {\cite{SKM:93}})
 that for $y\in L_{loc}^{1}(\R)$
 $\lim_{\ga\to 0}I^{\ga}y(t)=y(t)$, $t\in\R$ almost everywhere.

 Denote by $AC^k(\overline{\R}_+)$ the space of
 functions $y$ such that $y$ has continuous derivatives on
 $\overline{\R}_+ = \{x\in\R;x \geq 0\}$
 up to the order $k-1$ and $k$-th
 derivative is locally integrable function. We extend such functions
 to $\R$ so that $y(t)=0$, $t<0$.

 Let $y\in AC^k(\overline{\R}_+)$. Riemann-Liouville's fractional derivative of order
 $\ga\geq 0$, $\ga\leq k$ for some $k\in\N$,
 is defined by
 \begin{equation*}
  D^{\ga}y(t) := \frac{d^{k}}{dt^{k}}I^{k-\ga}y(t),\quad t > 0.
 \end{equation*}
 It follows that $D^{\ga}y\in L^1_{loc+}(\R)$. We refer to Section \ref{preliminaries}
 for the definition of $D^{\ga}y$, $y\in \spp$ and
 $\ga\in\R$.

 Note that
 $D^{\ga}I^{\ga}y = y$ for $y\in L^1_{loc+}(\R)$
 and $I^{\ga}D^{\ga}y = y$, $\ga > 0$ in the sense of tempered
 distribution. We sometimes denote $D^{-\ga}y = I^{\ga}y$, $\ga > 0$.

 Let $\phi$ be  continuous function
 in $[c,d]\subset [0,k]$, $k\in\N$.
 Distributed order fractional derivative  of
 $y\in AC^k(\overline{\R}_+)$ is given by
 $$
 \int_c^d \phi(\ga)D^{\ga}y(t)d\ga.
 $$

 Equation (\ref{ConstEq})
 models various physical processes. For example,
 if it models a viscoelastic body then (\ref{ConstEq})
 represents  a constitutive equation of a material
 and connects  strain $ y(t)$
 with  corresponding stress $z(t)$
 at time instant $t\geq 0$.
 For a standard linear viscoelastic body the constitutive equation
 is given as
 \begin{equation*}\label{4a}
  y(t) + b y^{(1)}(t) =  z (t) + a z^{(1)}(t),
 \end{equation*}
 where $(\cdot)^{(1)}=\frac{d}{dt}(\cdot)$ and $a,b$
 are experimentally determined constants
 with the restriction $0<a<b$ following from the second law of
 thermodynamics.
 A slight generalization (see \cite{TMA:021} and references there) of this equation is achieved by
 replacing the first derivative
 by a derivative of real order $\al>0$
 \begin{equation}\label{zener}
   y(t) + bD^{\alpha}y(t) = z(t) + a D^{\alpha}z(t),
 \end{equation}
 where, again $0<a<b$.
 If $0<\al<1$ then (\ref{zener}) represents visco-elastic effects
 while for $1<\al<2$, (\ref{zener}) describes visco-inertial
 effects of a material.
 Standard procedure in building rheological models is to use more
 then one derivative on each side of constitutive equation. When
 this is done (\ref{zener}) becomes
 \begin{equation}
 \sum_{n=0}^{N}b_{n}D^{\be_{n}}y(t)
 =\sum_{m=0}^{M}a_{m}D ^{\al_{m}}z(t), \label{1}
 \end{equation}%
 where $M,N\in\N$, $a_{m},b_{n}\in\R$ and $\al_{m}, \be_{n}\in\R$,
 $0\leq \al_{m},\be_{n}\leq 2$.

 Equation (\ref{1})
 is interpreted in \cite{TMA:02} as a Riemann sum.
 Moreover, in \cite{TMA:02} is proposed the constitutive equation of
 a linear visco-elastic body in a \textquotedblright
 distributed\textquotedblright\ order form as
 \begin{equation}\label{ConstEq1}
  \int_{0}^{2}\phi _{1}(\gamma )D^{\gamma }y(t)d\gamma =
   \int_{0}^{2}\phi_{2}(\gamma )D^{\gamma }z(t)d\gamma,\quad t > 0.
 \end{equation}
 In model (\ref{ConstEq1}) all
 derivatives of the stress $D^{\ga}z$ depend on
 all derivatives of the strain $D^{\ga}y$ for $\ga\in [c,d]$.
 Since the upper bound in integrals
 in (\ref{ConstEq1}) is two, both, visco-elastic and visco-inertial effects
 are  included. The presence of integral on the left
 hand side indicates, as experiments show, that dissipation
 properties depend on the order of the derivative. The integral on the right
 hand side is a consequence of the known principle of
 equipresence.

 In this paper we are looking for an $\spp$ solution $z$
 to (\ref{ConstEq1}) for a given but arbitrary $y\in\spp$.
 Such solution will be  used  in \cite{ATLJOSP:08} for solving
 a differential equation of motion coupled with
 constitutive equation (\ref{ConstEq1}).

 In  Section \ref{preliminaries} we
 extend results obtained in \cite{SPLJOTA:06}
 concerning integral in (\ref{ConstEq1}).
 Afterwards we define distributed order fractional derivative in $\spp$ and
 derive it's main properties.
 In Section \ref{SecLFDE} we state without proof (which is given in \cite{ATLJOSP:08})
 a theorem on the existence and uniqueness of a solution  to a linear fractional
 differential equation in the frame of $\spp$.
 Also we derive properties of such solution. In Section
 \ref{MechRem1} we connect condition for the uniqueness
 with a dissipation inequality that guarantees physical
 admissability of a equation in (\ref{ConstEq1}).

 We note that the models with distributed order
 derivatives were analyzed for example  in  \cite{Atanackovic:03,Caputo:01,TASP:04,MPG:06,Nakhushev:03}.

 \section{Distributed order fractional derivative}\label{preliminaries}
 We denote by $\ensuremath{{\cal S}}(\mathbb{R})$ the space of rapidly decreasing
 functions in $\mathbb{R}$ and by $\ensuremath{{\cal S}}^{\prime }(\mathbb{R})$ its
 dual, the space of tempered distributions;
 $\ensuremath{{\cal S}}_{+}^{\prime }(\mathbb{R})$ denotes
 its subspace consisting of distributions supported by
 $[0,\infty )$. In the sequel we drop $\R$ in the notation.
 We consider in $\ensuremath{{\cal S}}_{+}^{\prime }$ the family
 \begin{equation}\label{falfa}
 f_{\alpha }(x)=\left\{
 \begin{array}{cc}
  H(x)\frac{x^{\alpha -1}}{\Gamma (\alpha )}, & x\in\R,\,\alpha > 0, \\
 \frac{d^{N}}{dx^{N}}f_{\alpha + N }(x), & \quad \alpha \leq 0,\,\alpha + N > 0 ,\, N \in
 \N,
 \
 \end{array}
 \right.
 \end{equation}
 where $H$ is Heaviside's function. It is known that
 $f_{\al}*f_{\be}=f_{\al+\be}, \al,\be\in\R$. The convolution operator $f_{\alpha }\ast $ in
 $\spp$ is the operator of fractional differentiation for $\alpha < 0$ and of
 fractional integration for $\alpha > 0$. It coincides with
 the operator of derivation for $-\al\in\N$ and integration for $\alpha \in \N$.
 Let $\alpha > 0$ and $y\in L_{\text{loc+}}^{1}(\R)$.
 Then $I^{\alpha }y = f_{\alpha }\ast y$.
 Let $y\in AC^k(\overline{\R}_+)$ and  $0 < \al\leq k$.
 Then $D^{\alpha}y = f_{-\alpha }\ast y$.

 Recall, if $y\in \ensuremath{{\cal S}}'_+$ then its Laplace
 transform is defined by
 \begin{equation*}
 \widehat{y}(s)=\mathcal{L}y(s)=\langle y(t),\varphi (t)e^{-st}\rangle ,\quad
 \Re {s}>0,\quad \al\in\R,
 \end{equation*}%
 where $\varphi \in C^{\infty }$, $\varphi =1$ on $(-a,\infty )$ and $\varphi =0$
 in $(-\infty ,-2a)$, $a>0$. Note that $\mathcal{L}y$ is an analytic function for
 $\Re s>0$ and that the definition of $\LT y$
 does not depend on a chosen function $\varphi $ with given
 proprieties. We will often use the identity
 $$\LT({f_{\alpha}\ast y})(s) = \frac{1}{s^{\alpha }}\hat{y}(s),\quad \Re s > 0.$$

 First we analyze integral
 $\underset{\supp \phi}{\int}\phi(\gamma)D^{\gamma}y(\cdot)d\gamma$. To do this
 we examine the mapping  $\alpha\mapsto D^{\alpha}y: \R\to\spp $,
 for given $y\in \spp$ (in \cite{SPLJOTA:06} we have considered
 $y\in L_{loc}^{1}(\R)\cap \spp$).

 \begin{proposition}\label{P1}
 (a) Let $\alpha\in\mathbb{R} $ be fixed. Then the mapping
 $y \mapsto D^{\alpha}y$ is  linear and  continuous from
 $\ensuremath{{\cal S}}^{\prime}_+$ to $\ensuremath{{\cal S}}^{\prime}_+$.\newline
 (b) Let $y\in\ensuremath{{\cal S}}'_+ $ be fixed. Then $\alpha\mapsto D^{\alpha}y$
 is a smooth mapping from $\mathbb{R} $ to $\ensuremath{{\cal S}}^{\prime}_+$.\newline
 (c) The mapping $(\alpha,y) \mapsto D^{\alpha}y$ is
 continuous from $\mathbb{R} \times\ensuremath{{\cal S}}^{\prime}_+$
 to $ \ensuremath{{\cal S}} ^{\prime}_+$.
 \end{proposition}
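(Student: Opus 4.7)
The overarching strategy exploits the convolutional identity $D^\alpha y = f_{-\alpha}\ast y$ recalled in the preliminaries, so that every claim reduces to (i) properties of the curve $\alpha\mapsto f_\alpha : \R\to \spp$ and (ii) continuity properties of convolution on $\spp$. Part (a) is then essentially immediate: for fixed $\alpha$, the map $y\mapsto f_{-\alpha}\ast y$ is convolution by a fixed element of $\spp$, which is a classical continuous linear operation, since the common support in $[0,\infty)$ makes $\spp$ a convolution algebra with separately continuous convolution.

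For (b), the plan is to first establish the auxiliary smoothness $\alpha\mapsto f_\alpha \in C^\infty(\R;\spp)$, and then convolve with the fixed $y$. On $\alpha>0$ I would test against $\varphi\in\mathcal{S}$ and treat
\[
\langle f_\alpha,\varphi\rangle = \frac{1}{\Gamma(\alpha)}\int_0^\infty x^{\alpha-1}\varphi(x)\,dx
\]
by differentiating under the integral: each $\alpha$-derivative introduces powers of $\log x$ together with derivatives of $1/\Gamma(\alpha)$, all still integrable against $\varphi$ for $\alpha$ in a compact subset of $(0,\infty)$, with bounds uniform over bounded subsets of $\mathcal{S}$. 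For $\alpha\le 0$, the defining recursion $f_\alpha = (f_{\alpha+N})^{(N)}$ shifts $N$ derivatives onto $\varphi$ and reduces to the previous case. This yields $C^\infty$-smoothness in the strong topology of $\spp$; convolving with the fixed $y$ and invoking (a) closes this part.

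For (c), the principal obstacle is joint continuity. Given $\alpha_n\to\alpha_0$ and $y_n\to y_0$ in $\spp$, the natural splitting
\[
f_{-\alpha_n}\ast y_n - f_{-\alpha_0}\ast y_0 = (f_{-\alpha_n}-f_{-\alpha_0})\ast y_0 + f_{-\alpha_n}\ast(y_n-y_0)
\]
disposes of the first summand via (a) combined with the continuity of $\alpha\mapsto f_{-\alpha}$ established in (b). The second summand is the delicate one: the convolver $f_{-\alpha_n}$ moves but stays in a relatively compact, hence bounded, subset of $\spp$, while $y_n-y_0\to 0$. What one needs is a hypocontinuity statement for convolution in $\spp$ of the form \emph{bounded $\times$ convergent $\to$ convergent}, which in turn rests on the fact that all distributions involved share a common support in $[0,\infty)$, so that convolution is well-controlled on bounded sets in the Vladimirov framework. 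Securing this uniform estimate is where the real work lies; the rest is bookkeeping.
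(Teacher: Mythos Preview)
Your proposal is correct, and for part (a) it coincides with the paper's argument. In parts (b) and (c), however, you take a genuinely different route.

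For (b), the paper does not analyze $\alpha\mapsto f_\alpha$ directly. Instead it invokes the structure theorem for tempered distributions: any $y\in\spp$ can be written as $y=D^kF$ for some continuous $F$ of polynomial growth supported in $[0,\infty)$, so that $D^\alpha y = D^{\alpha+k}F$; smoothness of $\alpha\mapsto D^{\alpha+k}F$ for such $F$ is then imported from an earlier paper of the same authors treating the case $y\in L^1_{\mathrm{loc}}\cap\spp$. Your plan---proving $\alpha\mapsto f_\alpha$ is $C^\infty$ in $\spp$ by differentiating $\langle f_\alpha,\varphi\rangle$ under the integral (picking up $\log x$ factors and $\Gamma$-derivatives), then pushing smoothness through the continuous linear map $g\mapsto g\ast y$---is more self-contained and arguably cleaner, at the cost of doing the integral estimates yourself.

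For (c), the paper bypasses your splitting entirely: having established separate continuity in (a) and (b), it cites an abstract result (the Corollary to Theorem~34.1 in Treves) to upgrade to joint continuity, using that $\mathcal{S}$ is Fr\'echet. Your approach via the decomposition
\[
f_{-\alpha_n}\ast y_n - f_{-\alpha_0}\ast y_0 = (f_{-\alpha_n}-f_{-\alpha_0})\ast y_0 + f_{-\alpha_n}\ast(y_n-y_0)
\]
is the hands-on route; it correctly isolates the nontrivial point as hypocontinuity of convolution on $\spp$ (bounded $\times$ null $\to$ null), which is available in the Vladimirov framework. The paper's shortcut buys brevity; your argument buys transparency about exactly which property of convolution is doing the work.
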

 \begin{proof}
  (a) The continuity of $y \mapsto D^{\al}y = f_{-\alpha} * y$ is clear since
  for $g\in\spp$, $f \mapsto f*g$ is a continuous mapping of $\spp$ into $\spp$. \\
  (b) It is known that there exists a continuous function $F$,
  $\supp F\subset [0,\infty)$ and $k\in\N$ such that $|F(x)| < C (1+|x|)^k$, $x\in\R$ and $y = D^kF $.
  So the mapping $\alpha\mapsto D^{\alpha}y$ equals
  $\alpha\mapsto D^{\alpha+k}F$.
  By \cite[Proposition 1]{SPLJOTA:06} we know that for
  fixed $k$ and $\al\in\R$,  $\alpha + k \mapsto D^{\alpha+k}F$ is smooth
  so the same hold for $\al\mapsto D^{\alpha+k}F $.\\
  (c) Since $\ensuremath{{\cal S}}$ is Fr\'echet space as well as locally convex,
  the separate continuity proved in  (a) and (b) imply joint continuity
  (c.f. \cite[Corollary to Theorem 34.1]{Treves:67})  .
 \end{proof}
 By $\E'(\R)$ is denoted the space of compactly supported distributions
 i.e. the dual space of $\E(\R) = C_0^{\infty}(\R)$.
 \begin{definition}
 Let $\phi\in\E'(\R)$ and $y\in\spp$.
 Then
 $ \underset{\supp \phi}{\int}\phi(\gamma)D^{\gamma}y\, d\gamma$
 is defined as an element of $\spp$ by
 \begin{equation}  \label{DefInt}
 \langle \underset{\supp \phi}{\int} \phi(\gamma)D^{\gamma}y(t) \,d\gamma , \varphi(t) \rangle=
 \langle \phi(\gamma) , \langle D^{\gamma}y(t) , \varphi(t) \rangle \rangle,
 \quad \varphi\in{\cal S} (\mathbb{R} ).
 \end{equation}
 Such defined distribution is called distributed order fractional
 derivative.
 \end{definition}
 By  Proposition \ref{P1}, part (b),
 $\gamma\mapsto D^{\gamma}y:\mathbb{R} \to\spp$  is smooth
 as well as $\gamma\mapsto\langle D^{\gamma}y(t) , \varphi(t) \rangle : \R\to\R $.
 Since $\ensuremath{{\cal S}}$ is a Fre\'chet space it follows that in it's
 dual space the strong and weak boundedness are the same, thus a linear functional
 defined by (\ref{DefInt}) is continuous from $\ensuremath{{\cal S}}$ to $\C$
 and therefore is a tempered distribution supported by
 $[0,\infty)$.

 The following two examples are often used in applications.
 \begin{example}\label{ex1}
 a) Let $\gamma_i\in\R$, $i\in\{0,1,..,k\}$ and
 $\phi(\cdot) = \sum_{i=0}^k a_i \delta^{(i)}(\cdot-\gamma_i)$.
 Then (\ref{DefInt}) gives
 \begin{equation*}
   \underset{\supp \phi}{\int}\phi(\gamma)D^{\gamma}y (\cdot)\, d\gamma = \sum_{i=0}^k a_i
   D^{\gamma_i}y(\cdot),\quad \text { in }\spp .
 \end{equation*}

 b) Let $\phi$ be a continuous function in $[c,d]\subset\R$ for some $c<d$, then
 \begin{equation*}
  \int_c^d \phi(\gamma)D^{\gamma}y(\cdot)\, d\gamma =
  \lim_{N\to\infty}\sum_{i=1}^{N}\phi(\gamma_i)D^{\gamma_i}y(\cdot)
  \Delta\gamma_i,  \quad\text{in $\spp$},
 \end{equation*}
 where $\gamma_{i}$ are points of interval $[c,d]$ in usual definition of
 the Riemann sum for the  integral.
 \end{example}

 \begin{proposition} \label{inlap}
 Let $\phi\in\E'(\R)$ and $y\in\spp$.  Then:\\[0.1cm]
 a) $$y \mapsto \underset{\supp \phi}{\int}\phi(\gamma)D^{\gamma}y \,d\gamma $$
  is a linear and continuous mapping from $\spp $ to  $\spp$.\\
 b) $${\mathcal{L}}(\underset{\supp \phi}{\int} \phi(\gamma)D^{\gamma}y\,d\gamma)(s)
   = \hat{y}(s)\langle \phi(\gamma) , s^{\gamma} \rangle, \quad Re s> 0.$$
 c) If $\phi $ is continuous function on $[c,d]$ and $\phi(\ga)=0 $
 for $\ga\notin [c,d]$ then
 $${\mathcal{L}}(\int_c^d \phi(\gamma)D^{\gamma}y\,d\gamma)(s)
   = \hat{y}(s)\int_c^d \phi(\gamma)s^{\gamma} \,d\gamma, \quad Re s> 0.$$
 \end{proposition}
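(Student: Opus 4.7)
The plan is to treat the three parts in order, with (b) providing the main computation and (c) following as a direct specialization. Throughout, for $\Re s>0$ the function $\psi_s(t):=\varphi(t)e^{-st}$ (with $\varphi$ the cutoff entering the definition of $\LT{}$) lies in $\ensuremath{{\cal S}}(\R)$, and $\gamma\mapsto s^\gamma=e^{\gamma\log s}$ is entire; both facts will be used repeatedly.

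For (a), linearity is immediate from the bilinearity of the pairings in (\ref{DefInt}). For continuity, I would fix $\varphi\in\ensuremath{{\cal S}}$ and reduce to Proposition \ref{P1}. The joint continuity in part (c) together with the $\gamma$-smoothness in part (b) of Proposition \ref{P1} imply that $y\mapsto\bigl(\gamma\mapsto\langle D^\gamma y(t),\varphi(t)\rangle\bigr)$ is continuous from $\spp$ into $\E(\R)$. Since $\phi\in\E'(\R)$ acts continuously on $\E(\R)$, composition yields continuity of $y\mapsto\langle\phi(\gamma),\langle D^\gamma y(t),\varphi(t)\rangle\rangle$ from $\spp$ to $\C$ for every $\varphi$, i.e.\ continuity of $y\mapsto\int_{\supp\phi}\phi(\gamma)D^\gamma y\,d\gamma$ into $\spp$ equipped with its weak* topology.

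For (b), the strategy is to apply (\ref{DefInt}) with the test function $\psi_s$ just introduced. This produces
\begin{equation*}
\LT\bigl(\textstyle\int_{\supp\phi}\phi(\gamma)D^\gamma y\,d\gamma\bigr)(s)
=\langle\phi(\gamma),\langle D^\gamma y(t),\psi_s(t)\rangle\rangle
=\langle\phi(\gamma),\LT(D^\gamma y)(s)\rangle.
\end{equation*}
Using the identity $\LT(D^\gamma y)(s)=\LT(f_{-\gamma}\ast y)(s)=s^\gamma\hat y(s)$ already recalled in Section \ref{preliminaries}, and factoring the $\gamma$-independent quantity $\hat y(s)$ out of the $\phi$-pairing, gives the claim. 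Part (c) then follows at once: a continuous $\phi$ supported in $[c,d]$ lies in $\E'(\R)$ and acts on elements of $\E(\R)$ by ordinary integration, so $\langle\phi(\gamma),s^\gamma\rangle=\int_c^d\phi(\gamma)s^\gamma\,d\gamma$.

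The only technical point demanding care is the legality of pairing $\phi\in\E'(\R)$ with the $\gamma$-functions that arise, i.e.\ verifying they belong to $\E(\R)$. For $\gamma\mapsto s^\gamma$ this is transparent; for $\gamma\mapsto\langle D^\gamma y(t),\varphi(t)\rangle$ with $y\in\spp$, $\varphi\in\ensuremath{{\cal S}}$, it is precisely Proposition \ref{P1}(b). Beyond this, I anticipate no substantive obstacle.
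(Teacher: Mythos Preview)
Your proof is correct and follows essentially the same route as the paper's: parts (b) and (c) are identical computations based on unwinding (\ref{DefInt}) with the test function $\varphi(t)e^{-st}$ and invoking $\LT(D^\gamma y)(s)=s^\gamma\hat y(s)$. For part (a) you justify continuity by appealing to the joint continuity in Proposition~\ref{P1}(c) (arguing that $y\mapsto(\gamma\mapsto\langle D^\gamma y,\varphi\rangle)$ is continuous into $\E(\R)$), whereas the paper argues more tersely via a sequential criterion, using only Proposition~\ref{P1}(a) to conclude $\langle D^\gamma y_n,\varphi\rangle\to 0$ and hence $\langle\phi(\gamma),\langle D^\gamma y_n,\varphi\rangle\rangle\to 0$; your version is arguably more careful about the $\E(\R)$-convergence needed to pair with $\phi\in\E'(\R)$.
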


 \begin{proof} a) Clearly, this mapping is linear. Let $y_n \to 0$ in $\spp$. Then
 $\dis{\phi(\ga)}{\dis{D^{\ga}y_n}{\vphi}} \to  0$, as $n\to\infty$,
  since by Proposition \ref{P1} part (a), $\dis{D^{\ga}y_n}{\vphi}\to 0$, as $n\to\infty$.\\[0.1cm]
  b) By the definition,
  \begin{align*}
  \LT(\underset{\supp \phi}{\int} &\phi(\gamma)D^{\gamma}y \,d\gamma)(s)
   = \langle \underset{\supp \phi}{\int} \phi(\gamma)D^{\gamma}y(t) \,d\gamma , \varphi(t) e^{-st}
  \rangle\\
  &=\langle \phi(\gamma) , \langle D^{\gamma}y\,(t) , \varphi(t) e^{-st} \rangle
  \rangle = \langle \phi(\gamma) , s^{\gamma}\hat{y}(s) \rangle,\quad \Re s> 0.
  \end{align*}
 c) In the case that $\phi$ is continuous we have \underline{}
 $$ \langle \phi(\gamma) , s^{\gamma}\hat{y}(s) \rangle
 =\underset{\supp\phi}{\int}\phi(\gamma)s^{\gamma}\hat{y}(s) \,d\gamma $$
 and therefore the assertion follows.
 \end{proof}

 If we assume that $y,z\in\spp$ in (\ref{ConstEq1}),
 put $\phi = \phi_2$ and $g=\int_{\supp\phi_1}\phi_1(\ga)D^{\ga} z$
 then the solvability of
 (\ref{ConstEq1}) with respect to $z$
 reduces to the solvability
 of
 \begin{equation}\label{DOFDE}
 \int_{\supp\phi}\phi(\ga)D^{\ga} z = g,\quad g\in\spp.
 \end{equation}

 \section{Linear fractional differential equation in $\spp$}\label{SecLFDE}
 Assuming that  $g\in\spp$  and that $\phi$ is of the form as in Example
 \ref{ex1}, equation (\ref{DOFDE}) becomes
 \begin{equation}  \label{LFDE}
   \sum_{i=0}^k a_i D^{\gamma_i}z = g,\quad {\text {in } }\,\spp.
 \end{equation}
 We suppose that $\ga_i\in [0,2)$ such that $\ga_0 > \ga_i > \ga_{i+1}> \ga_k$,
 $i\in\{1,...k-1\}.$
 \begin{theorem}\label{T1}
 Equation (\ref{LFDE}) has a unique solution $z\in\spp$ if and only if
 \begin{align*}
   (A_0)\quad\quad \sum_{i=0}^k a_i s^{\gamma_i}\neq 0,
   \quad s\in\mathbb{C} _+ = \{ s\in\C\,;\, \Re s >0\}.
  \end{align*}
 \end{theorem}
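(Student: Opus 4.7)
The approach is to apply the Laplace transform and reduce (\ref{LFDE}) to an algebraic identity in the transform variable. By Proposition \ref{inlap}(b) with $\phi=\sum_{i=0}^{k}a_{i}\delta(\cdot-\ga_{i})\in\E'(\R)$, equation (\ref{LFDE}) is equivalent, for any $z\in\spp$, to
\begin{equation*}
P(s)\hat z(s)=\hat g(s),\qquad s\in\C_{+},
\end{equation*}
where $P(s):=\sum_{i=0}^{k}a_{i}s^{\ga_{i}}$ (principal branch of each $s^{\ga_{i}}$). Condition $(A_{0})$ is then precisely the statement that $P$ has no zeros on $\C_{+}$.

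Uniqueness in $\spp$ is in fact unconditional. If $z_{1},z_{2}\in\spp$ both solve (\ref{LFDE}), then $w:=z_{1}-z_{2}$ satisfies $P\hat w\equiv 0$ on $\C_{+}$. Since $P$ is analytic on $\C_{+}$ and, provided not all $a_{i}$ vanish, not identically zero, its zeros are isolated, so $\hat w$ vanishes on a dense open subset of $\C_{+}$ and hence on all of $\C_{+}$ by analyticity; injectivity of the Laplace transform forces $w=0$. For the necessity of $(A_{0})$, if $P(s_{0})=0$ for some $s_{0}\in\C_{+}$, take $g=\delta$, so $\hat g\equiv 1$: no $\spp$--solution can exist, because evaluating the transformed equation at $s_{0}$ would force $0=1$.

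The substantive direction is sufficiency. Assuming $(A_{0})$, set $F(s):=\hat g(s)/P(s)$; this is analytic on $\C_{+}$, and by the Schwartz characterization of Laplace transforms of elements of $\spp$ it suffices to verify that $F$ has polynomial growth in $|s|$ and in $1/\Re s$ on $\C_{+}$. Since $\hat g$ has such growth (by $g\in\spp$), the task reduces to bounding $1/|P|$. At infinity the dominant term yields $P(s)=a_{0}s^{\ga_{0}}(1+o(1))$, hence $|1/P(s)|=O(|s|^{-\ga_{0}})$, while on any compact subset of $\C_{+}$ continuity and non--vanishing give a uniform bound.

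The main obstacle is the behaviour as $\Re s\to 0^{+}$, where $P$ may vanish on the boundary imaginary axis even though it does not on $\C_{+}$. The key observation is that any such boundary zero is of finite order in $s$ (because of the principal branch structure of the $s^{\ga_{i}}$), so locally $|P(s)|$ is bounded below by a positive power of $|s-i\om_{0}|$, and on $\C_{+}$ this distance is bounded below by $\Re s$; patching these local bounds with the decay at infinity produces a global estimate of Schwartz--Laplace type on $F$. Inverting the Laplace transform then supplies the desired $z\in\spp$. The full technical execution of this step, as indicated by the authors, is carried out in \cite{ATLJOSP:08}.
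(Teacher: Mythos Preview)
The paper does not actually contain a proof of this theorem; immediately after the statement it writes ``The proof is given in \cite{ATLJOSP:08}.'' So there is nothing in the present paper to compare your argument against in detail. Your sketch follows the natural Laplace--transform route---reduce via Proposition~\ref{inlap} to $P(s)\hat z(s)=\hat g(s)$, then invoke the Schwartz characterization of Laplace images of $\spp$ to invert---and this is almost certainly the approach of the cited reference as well.

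The structure of your outline is sound: the necessity argument with $g=\delta$, the (unconditional) uniqueness via analyticity of $P$ and $\hat w$, and the identification of the behaviour of $1/P$ near the imaginary axis as the essential difficulty are all correct. One point to sharpen: at $s=0$ the symbol $P$ is \emph{not} analytic (each $s^{\ga_i}$ has its branch point there), so your ``finite order zero'' reasoning does not apply at the origin. The fix is direct: as $s\to 0$ one has $P(s)\sim a_k s^{\ga_k}$, whence $|1/P(s)|=O(|s|^{-\ga_k})\le O((\Re s)^{-\ga_k})$, which is already of the admissible polynomial type. Combined with your observation that zeros of $P$ on $i\R\setminus\{0\}$ are isolated and confined to a bounded set (by the large-$|s|$ asymptotics $P(s)\sim a_0 s^{\ga_0}$), the patching you describe produces a global bound of the form $|1/P(s)|\le C(1+|s|)^{M}(\Re s)^{-N}$ on $\C_+$, and the sufficiency direction follows.
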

 The proof is given in \cite{ATLJOSP:08}.
 The solution to (\ref{LFDE}) that is obtain in Theorem \ref{T1} is given by
 $z=l*g$, where
 \begin{equation}\label{l-fund}
 l(t)=\ILT(\frac{1}{\sum_{i=0}^k a_i s^{\ga_i}})(t),\quad t>0,
 \end{equation}
 is a fundamental solution to (\ref{LFDE}) i.e. solution to
 $\sum_{i=0}^k a_i D^{\ga_i}y=\delta.$

 The following lemma gives main properties of $l$ defined by (\ref{l-fund}).
 \begin{lemma}\label{mg}
  Assume $(A_0)$. Let $\ga_i\in [0,2)$ and $\ga_0 > \ga_i > \ga_{i+1}> \ga_k$,
 for all $i\in\{1,..,k-1\}$. Let $l$ be defined by (\ref{l-fund})
 and $l(t)=0$, $t<0$.  Then:

  (i) $l$ is a locally integrable function in $\R$.

  (ii) Moreover, $l$ is absolutely continuous in $\R$,  if $\ga_0 - \ga_k > 1$.
 \end{lemma}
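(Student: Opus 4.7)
Set $P(s):=\sum_{i=0}^k a_i s^{\gamma_i}$, so by Theorem~\ref{T1} the fundamental solution has Laplace transform $\hat l(s) = 1/P(s)$, analytic on $\mathbb{C}_+$ by $(A_0)$. The strict ordering of the $\gamma_i$ gives the extremal asymptotics $|1/P(s)| = O(|s|^{-\gamma_0})$ as $|s|\to\infty$ and $|1/P(s)| = O(|s|^{-\gamma_k})$ as $|s|\to 0$, both uniformly in $\Re s \geq c > 0$.

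For (i), my plan is to extract the behaviour of $l$ near $t=0$ from the large-$|s|$ behaviour of $\hat l$. Writing $\tfrac{1}{P(s)} = \tfrac{1}{a_0 s^{\gamma_0}}\cdot\tfrac{1}{1+h(s)}$ with $h(s):=\sum_{i\ge 1}(a_i/a_0)s^{\gamma_i-\gamma_0}$ small for $|s|$ large, and expanding geometrically, one obtains
\[
  \frac{1}{P(s)} = \sum_{j=0}^{N}\frac{c_j}{s^{\alpha_j}} + R_N(s),
\]
with $\alpha_0 = \gamma_0 < \alpha_1 < \dots$ strictly increasing and $|R_N(s)| = O(|s|^{-\alpha_{N+1}})$ at infinity. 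Each summand inverts to $c_j\, t^{\alpha_j-1}/\Gamma(\alpha_j)\in L^1_{loc+}(\R)$ since $\alpha_j\ge\gamma_0>0$. Choosing $N$ so that $\alpha_{N+1}>1$ makes $R_N$ absolutely integrable along every Bromwich line $\Re s = c$, so $\mathcal{L}^{-1}(R_N)$ is continuous and thus locally integrable. Adding the pieces gives $l \in L^1_{loc+}(\R)$.

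For (ii), the hypothesis $\gamma_0-\gamma_k>1$ together with $\gamma_i\in[0,2)$ forces $\gamma_0>1$ and $\gamma_k<1$ (if $\gamma_0\le 1$ then $\gamma_k<0$, and if $\gamma_k\ge 1$ then $\gamma_0>2$, each a contradiction). It enters the argument through the bound $|s^{\gamma_k}/P(s)| = O(|s|^{\gamma_k-\gamma_0})$ at infinity, whose exponent is strictly below $-1$, while $s^{\gamma_k}/P(s)\to 1/a_k$ at the origin; hence $s^{\gamma_k}/P(s)$ is absolutely integrable on every Bromwich line, and $m := \mathcal{L}^{-1}(s^{\gamma_k}/P(s)) = D^{\gamma_k} l$ is continuous and bounded on $[0,\infty)$ with $m(0)=0$ (from $s\cdot s^{\gamma_k}/P(s)\to 0$ at infinity). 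Applying the asymptotic argument of (i) to $\mathcal{L}(m')(s) = s^{\gamma_k+1}/P(s)$, whose leading order at infinity is $s^{\gamma_k+1-\gamma_0}/a_0$ with $\gamma_0-\gamma_k-1>0$, shows $m'\in L^1_{loc+}(\R)$, so $m$ is absolutely continuous on bounded intervals. Finally, $l = f_{\gamma_k}\ast m$ (their Laplace transforms coincide); integration by parts in this convolution, combined with $m(0)=0$, yields $l'(t) = I^{\gamma_k} m'(t)\in L^1_{loc+}(\R)$, so $l$ is absolutely continuous on every bounded interval of $\R$.

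The main technical obstacle is the uniform remainder estimate $|R_N(c+iy)|\le C_N(1+|y|)^{-\alpha_{N+1}}$ in part~(i): it relies on the non-vanishing of $P$ on $\{\Re s=c\}$ and the ordering $\gamma_0>\gamma_1>\dots>\gamma_k\ge 0$. Once that bound is available, the rest of the argument uses only the elementary inversion $\mathcal{L}^{-1}(s^{-\alpha}) = t^{\alpha-1}/\Gamma(\alpha)$ and standard mapping properties of the Riemann--Liouville fractional integral.
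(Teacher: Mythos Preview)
Your approach is correct but genuinely different from the paper's. The paper deforms the Bromwich contour to a Hankel-type path wrapping the branch cut along the negative real axis; by the residue theorem this yields the explicit representation
\[
  l(t)=\sum_{m}\operatorname{Res}_{s=s_m}\frac{e^{st}}{P(s)}+\frac{1}{\pi}\int_0^\infty e^{-st}r(s)\,ds,
\]
where $r$ is the jump of $1/P$ across the cut. Local integrability of $l$ is then read off from the large-$s$ behaviour $r(s)\sim s^{-\gamma_0}$ (and near $s=0$ from $\gamma_k=0$, the case $\gamma_k>0$ being reduced to this one via $l=f_{\gamma_k}\ast l_1$). Part (ii) proceeds the same way: when $\gamma_0>1$ the integral defining $l_0$ converges absolutely, so $l_0$ is continuous, and the derivative $l_0'$ is handled by the same asymptotics since $sr(s)\sim s^{1-\gamma_0}$ is still integrable at infinity.

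Your route stays on the Bromwich line and instead peels off the singular part of $\hat l$ by an asymptotic expansion $1/P(s)=\sum_{j\le N}c_j s^{-\alpha_j}+R_N(s)$, inverting the explicit terms to $f_{\alpha_j}\in L^1_{\mathrm{loc}+}$ and the absolutely integrable remainder to a continuous function. For (ii) you factor $l=f_{\gamma_k}\ast m$ with $\hat m=s^{\gamma_k}/P$, use $\gamma_0-\gamma_k>1$ to make $\hat m$ absolutely integrable on $\Re s=c$ (so $m$ is continuous with $m(0)=0$), and then apply the expansion argument of (i) to $\widehat{Dm}=s^{\gamma_k+1}/P$; since $Dl=f_{\gamma_k}\ast Dm$ is a convolution of locally integrable functions supported in $[0,\infty)$, $l$ is absolutely continuous. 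This avoids the contour deformation and the accounting of poles entirely, and as a by-product gives the small-$t$ asymptotics $l(t)\sim \sum c_j\,t^{\alpha_j-1}/\Gamma(\alpha_j)$. The paper's approach, in turn, produces the explicit residue-plus-real-integral formula for $l$, which it actually uses later (Section~\ref{MechRem1}) to analyse the dissipation inequality. Two minor remarks: the behaviour of $s^{\gamma_k}/P(s)$ at the origin is irrelevant for integrability on $\Re s=c>0$, so that clause can be dropped; and for the step $l'=I^{\gamma_k}m'$ it is cleanest to argue in $\mathcal S'_+$ that $Dl=f_{\gamma_k}\ast Dm\in L^1_{\mathrm{loc}+}$, which together with $l\in L^1_{\mathrm{loc}+}$ from (i) gives $l\in W^{1,1}_{\mathrm{loc}}$ and hence absolute continuity, without needing the integration by parts.
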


 \begin{proof}
 (i) Let $\ga_k = 0$ and $a_k\neq 0$. Consider the integral
  $$
 \int_{\Gamma}\frac{e^{st}ds}{\sum_{i=0}^k a_is^{\gamma_i}},\quad
 t>0,
  $$
 where $\Gamma=\bigcup_{i=1}^5 \Gamma_i$
  and for arbitrarily chosen $R>0$, $0<\eps<R$ and $x_0 > 0$,
  $\Gamma_i$ are given by
 $$
 \Gamma_0:\, \{z; \Re z=x_0;\,0< \arg z < \phi_0 = \arcsin {\frac{x_0}{R}}\};
 $$
 $$
 \Gamma_1:\, z = Re^{i\varphi},\,-\phi_0 < \phi_0 \leq \varphi< \pi;
 \quad \Gamma_2:\, z = Re^{i\varphi},\,-\pi < \varphi\leq -\phi_0<0;
 $$
 $$
 \Gamma_3:\, z=\eps e^{i\varphi},\, -\pi < \varphi<\pi;
 \quad
  \Gamma_4:\, z=xe^{i\pi};\quad \Gamma_5:\,z=xe^{-i\pi},\quad
  x\in(\eps,R).
 $$
 By the Cauchy residue theorem, letting $\eps\to 0$ and
 $R\to\infty$,
 one obtains
 \begin{equation}\label{res}
  l(t)= \sum_{s=s_m,\\ m=1}^n \text{Res} \{\frac{e^{st}}{\sum_{i=0}^k a_is^{\gamma_i}} \} + l_0(t),
 \quad t>0,
 \end{equation}
 where $s_m$ are poles of  the function $s\mapsto\frac{e^{st}}{\sum_{i=0}^k
 a_is^{\gamma_i}}$ and
 \begin{equation}\label{integ}
 l_0(t) = \frac{1}{\pi}\int_0^{\infty} e^{-st}r(s)ds,\quad t>0,
 \end{equation}
 with
 \begin{equation*}
 r(s)=\frac{2\sum_{i=0}^k a_is^{\gamma_i}\sin(\gamma_i\pi)} {
 \sum_{i=0}^k a_i^2s^{2\gamma_i} + 2\sum_{i,j=0,i\neq j}^k a_ia_j s^{\gamma_i +
 \gamma_j} \cos(\gamma_i-\gamma_j)\pi},\ s>0.
 \end{equation*}
  We refer to \cite{TALJOSP:06} for similar calculations.

 Let $0\leq a\leq b$. Then
 \begin{equation}\label{l00}
  \int_a^bl_0(t)dt= \int_0^\infty(e^{-sa}-e^{-sb})\frac{1}{s}r(s)ds.
 \end{equation}
  Since $\ga_k=0$, this integral is finite in a neighborhood of $s=0$. In
  a neighborhood of $ s = \infty $ we have
 $ \frac{r(s)}{s} \sim \frac{1}{s^{\ga_0 +1}}$.
 Thus
 $\ga_0+1 > 1$  implies that the integral in (\ref{l00}) is finite.
 Therefore, $l_0$ is locally integrable.
 By  (\ref{res}) and the fact that $\Re s_m < 0$ (by
 $(A_0)$),
 we obtain that $l$ is locally integrable.

  Let $\ga_k > 0$. Then
 \begin{equation}
  l =  \ILT( \frac{1}{s^{\gamma_k}})* \ILT(\frac{1}{\sum_{i=0}^k a_is^{\gamma_i-\gamma_k}})
    = f_{\ga_k}*l_1,
  \end{equation}
  where
  $$l_1=\ILT(\frac{1}{\sum_{i=0}^k a_is^{\be_i}}),
  \quad \be_i = \gamma_i-\gamma_k,\quad i\in \{0,1,..,k\},
  \quad \Re s > 0,
  $$
  and $f_{\ga_k}$ is defined by  (\ref{falfa}).
 Note that $\be_k = 0$. By the first part of the proof, $l_1$ is a locally integrable function.
 Since $f_{\ga_k}$ is locally integrable, $l$ is locally integrable as the convolution of two locally
 integrable functions.

 (ii) Let $\ga_k = 0$. Then (\ref{integ}) is finite in a neighborhood
 of $s=0$. In a neighborhood of $s=\infty$ we have that
 $r(s)\sim \frac{1}{s^{\ga_0}}$.
 Since $\ga_0 > 1$, the integrand in (\ref{integ}) is integrable
 for all $t>0$ and (\ref{integ}) is finite.
 Let $t_0>0$.
 Since  $|e^{-st}r(s)|\leq e^{-st_0}|r(s)|:=g(s)$ for all $t > t_0$ and $g\in
 L^1((0,\infty))$, by the classical theory
 we obtain that (\ref{integ}) defines
 a continuous function for $t>t_0$. It follows that $l_0$ and $l$
 (by (\ref{res})) are continuous for $t>0$.
 Further on, since
 $|\partial_t(e^{-st}r(s))|\leq e^{-st_0}|sr(s)|:=g_1(s)$,
 for all $t > t_0$
 and $g_1\in L^1(0,\infty)$, we obtain that $l_0$ is differentiable
 and
 $$l'_0(t) = \int_0^\infty (-s)e^{-st}r(s),\, t>0.$$
 Since $-sr(s)\sim \frac{1}{s^{\ga_0-1}}$ in a neighborhood of $s=\infty$, as in (i) we show
 that $l'_0$ is a locally integrable function. Therefore,
 the derivative $l'_0$ exists and it is a locally integrable function. It
 means that $l$ is absolutely continuous.

 For $\ga_k > 0$ we proceed as in (i) and obtain $l = f_{\ga_k}*l_1$ with
 $l_1$ absolutely continuous. Therefore,  $l$ is also absolutely continuous.
\end{proof}

 \begin{remark}
  If $\ga_i\in[0,\infty)$ and if $\ga_0 > p$, $p \in\N$, then $l$ is continuous in
  $\R$ as well as its derivatives up to order $p-1$ while the $p$-th derivative is a
  locally integrable function, i.e. $l\in AC^p$.
 \end{remark}

 \section{Comments from mechanics and further applications}\label{MechRem1}
  Equation (\ref{LFDE}) represents a constitutive equation of a
  visco-elastic body.
  We will show that in the case when  there exist
  $s_0\in \C_+$ such that
  $\sum_{i=0}^{k}a_{i}s_{0}^{\gamma _{i}}=0$ it follows that
  the dissipation inequality (\ref{dod1}), (see \cite{Christensen:82}) is violated.
  The dissipation
  inequality requires that for any $T>0$,
  any $y$ and $z$ the solution to  $\sum_{i=0}^{k}a_{i}D^{\ga_{i}}z(t)= y(t)$,
  $t>0$,
  the dissipation work, $A_d$ is nonnegative, i.e.
   \begin{equation} \label{dod1}
    A_{d}=\int_{0}^{T}z(t)y^{(1)}(t)dt \geq 0.
   \end{equation}
  Let $T>0$ and $y(t) = H(t)-H(t-\tau)$, $0 < \tau < T $, $t>0$.
  Then
 \begin{eqnarray}\label{dod2}
  z(t) = \ILT(\frac{1}{s}\frac{1}{\sum_{i=0}^{k}a_{i}s^{\ga_i}}
     - \frac{1}{s}\frac{e^{-\tau s}}{\sum_{i=0}^k a_is^{\ga_i}})
     =\int_0^t g(u)du,\quad t>0,
 \end{eqnarray}
 where $g(u)= l(u) - l(u - \tau )$ and $l$ is
 the fundamental solution to (\ref{LFDE}) given by (\ref{l-fund}).
 Assume that $w(s_0)= \sum_{i=0}^{k}a_{i}s_0^{\ga_i}=0$
 for $s_0=u+iv$, $u>0$.
 Then by (\ref{res})
 \begin{equation*}
 l(t)= {\frac{e^{st}}{w^{(1)}(s)}}\left|_{s=s_0}\right. +
    {\sum_{j=1}^k (\frac{e^{st}}{w^{(1)}(s)})}\left|_{s=s_j}\right. +
    l_0(t),\quad t>0.
 \end{equation*}
 Further, note that
 \begin{equation}\label{dod4}
  \frac{e^{st}}{w^{(1)}(s)}\left|_{s=s_0}\right. =
  \frac{e^{ut}[\cos(vt) + i \sin(vt)]}{w^{(1)}(s_0)},\quad t>0,
 \end{equation}
 represents oscillations with increasing amplitudes.
 Inserting (\ref{dod2}) in (\ref{dod1}) we obtain
  \begin{equation}\label{dod3}
   A_{d}= \lim_{t\to 0}\int_0^tg(u)du - \int_0^{\tau }g(u)du \geq 0, \quad \tau,t>0.
 \end{equation}
 It is obvious that due to the presence of the term (\ref{dod4}) in
 $g(t)$ the inequality (\ref{dod3}) could be violated by
 a suitable choice of $\tau $.

\medskip
\noindent

\label{lastpage}

\end{document}